\documentclass{amsart}
\usepackage{amssymb}
\usepackage{amsfonts}

\setcounter{MaxMatrixCols}{10}

\newtheorem{theorem}{Theorem}
\theoremstyle{plain}

\newtheorem{axiom}{Axiom}

\newtheorem{conjecture}{Conjecture}
\newtheorem{corollary}{Corollary}

\newtheorem{definition}{Definition}
\newtheorem{example}{Example}
\newtheorem{exercise}{Exercise}
\newtheorem{lemma}{Lemma}

\newtheorem{proposition}{Proposition}
\newtheorem{remark}{Remark}

\numberwithin{equation}{section}
\typeout{TCILATEX Macros for Scientific Word 2.5 <22 Dec 95>.}
\typeout{NOTICE:  This macro file is NOT proprietary and may be 
freely copied and distributed.}
\makeatletter
%
\newcount\@hour\newcount\@minute\chardef\@x10\chardef\@xv60
\def\tcitime{
\def\@time{%
  \@minute\time\@hour\@minute\divide\@hour\@xv
  \ifnum\@hour<\@x 0\fi\the\@hour:%
  \multiply\@hour\@xv\advance\@minute-\@hour
  \ifnum\@minute<\@x 0\fi\the\@minute
  }}%

\@ifundefined{hyperref}{}{}

\@ifundefined{qExtProgCall}{\def\qExtProgCall#1#2#3#4#5#6{\relax}}{}
%
%
%
%
\def\QCTOpt[#1]#2{%
  \def\QCTOptB{#1}
  \def\QCTOptA{#2}
}
\def\QCTNOpt#1{%
  \def\QCTOptA{#1}
  \let\QCTOptB\empty
}
\def\Qct{%
  \@ifnextchar[{%
    \QCTOpt}{\QCTNOpt}
}
\def\QCBOpt[#1]#2{%
  \def\QCBOptB{#1}
  \def\QCBOptA{#2}
}
\def\QCBNOpt#1{%
  \def\QCBOptA{#1}
  \let\QCBOptB\empty
}
\def\Qcb{%
  \@ifnextchar[{%
    \QCBOpt}{\QCBNOpt}
}
\def\PrepCapArgs{%
  \ifx\QCBOptA\empty
    \ifx\QCTOptA\empty
      {}%
    \else
      \ifx\QCTOptB\empty
        {\QCTOptA}%
      \else
        [\QCTOptB]{\QCTOptA}%
      \fi
    \fi
  \else
    \ifx\QCBOptA\empty
      {}%
    \else
      \ifx\QCBOptB\empty
        {\QCBOptA}%
      \else
        [\QCBOptB]{\QCBOptA}%
      \fi
    \fi
  \fi
}
\newcount\GRAPHICSTYPE
\GRAPHICSTYPE=\z@
\def\GRAPHICSPS#1{%
 \ifcase\GRAPHICSTYPE
   \special{ps: #1}%
 \or
   \special{language "PS", include "#1"}%
 \fi
}%
%
%
%
\def\graffile#1#2#3#4{%
    \leavevmode
    \raise -#4 \BOXTHEFRAME{%
        \hbox to #2{\raise #3\hbox to #2{\null #1\hfil}}}%
}%
%
\def\draftbox#1#2#3#4{%
 \leavevmode\raise -#4 \hbox{%
  \frame{\rlap{\protect\tiny #1}\hbox to #2%
   {\vrule height#3 width\z@ depth\z@\hfil}%
  }%
 }%
}%
\newcount\draft
\draft=\z@

\newif\ifwasdraft
\wasdraftfalse

\def\GRAPHIC#1#2#3#4#5{%
 \ifnum\draft=\@ne\draftbox{#2}{#3}{#4}{#5}%
  \else\graffile{#1}{#3}{#4}{#5}%
  \fi
 }%
\def\addtoLaTeXparams#1{%
    \edef\LaTeXparams{\LaTeXparams #1}}%
%

\newif\ifBoxFrame \BoxFramefalse
\newif\ifOverFrame \OverFramefalse
\newif\ifUnderFrame \UnderFramefalse

\def\BOXTHEFRAME#1{%
   \hbox{%
      \ifBoxFrame
         \frame{#1}%
      \else
         {#1}%
      \fi
   }%
}

\def\doFRAMEparams#1{\BoxFramefalse\OverFramefalse\UnderFramefalse\readFRAMEparams#1\end}%
\def\readFRAMEparams#1{%
 \ifx#1\end%
  \let\next=\relax
  \else
  \ifx#1i\dispkind=\z@\fi
  \ifx#1d\dispkind=\@ne\fi
  \ifx#1f\dispkind=\tw@\fi
  \ifx#1t\addtoLaTeXparams{t}\fi
  \ifx#1b\addtoLaTeXparams{b}\fi
  \ifx#1p\addtoLaTeXparams{p}\fi
  \ifx#1h\addtoLaTeXparams{h}\fi
  \ifx#1X\BoxFrametrue\fi
  \ifx#1O\OverFrametrue\fi
  \ifx#1U\UnderFrametrue\fi
  \ifx#1w
    \ifnum\draft=1\wasdrafttrue\else\wasdraftfalse\fi
    \draft=\@ne
  \fi
  \let\next=\readFRAMEparams
  \fi
 \next
 }%
%

\def\IFRAME#1#2#3#4#5#6{%
      \bgroup
      \let\QCTOptA\empty
      \let\QCTOptB\empty
      \let\QCBOptA\empty
      \let\QCBOptB\empty
      #6%
      \parindent=0pt%
      \leftskip=0pt
      \rightskip=0pt
      \setbox0 = \hbox{\QCBOptA}%
      \@tempdima = #1\relax
      \ifOverFrame
          \typeout{This is not implemented yet}%
          \show\HELP
      \else
         \ifdim\wd0>\@tempdima
            \advance\@tempdima by \@tempdima
            \ifdim\wd0 >\@tempdima
               \textwidth=\@tempdima
               \setbox1 =\vbox{%
                  \noindent\hbox to \@tempdima{\hfill\GRAPHIC{#5}{#4}{#1}{#2}{#3}\hfill}\\%
                  \noindent\hbox to \@tempdima{\parbox[b]{\@tempdima}{\QCBOptA}}%
               }%
               \wd1=\@tempdima
            \else
               \textwidth=\wd0
               \setbox1 =\vbox{%
                 \noindent\hbox to \wd0{\hfill\GRAPHIC{#5}{#4}{#1}{#2}{#3}\hfill}\\%
                 \noindent\hbox{\QCBOptA}%
               }%
               \wd1=\wd0
            \fi
         \else
            \ifdim\wd0>0pt
              \hsize=\@tempdima
              \setbox1 =\vbox{%
                \unskip\GRAPHIC{#5}{#4}{#1}{#2}{0pt}%
                \break
                \unskip\hbox to \@tempdima{\hfill \QCBOptA\hfill}%
              }%
              \wd1=\@tempdima
           \else
              \hsize=\@tempdima
              \setbox1 =\vbox{%
                \unskip\GRAPHIC{#5}{#4}{#1}{#2}{0pt}%
              }%
              \wd1=\@tempdima
           \fi
         \fi
         \@tempdimb=\ht1
         \advance\@tempdimb by \dp1
         \advance\@tempdimb by -#2%
         \advance\@tempdimb by #3%
         \leavevmode
         \raise -\@tempdimb \hbox{\box1}%
      \fi
      \egroup%
}%
%
\def\DFRAME#1#2#3#4#5{%
 \begin{center}
     \let\QCTOptA\empty
     \let\QCTOptB\empty
     \let\QCBOptA\empty
     \let\QCBOptB\empty
     \ifOverFrame 
        #5\QCTOptA\par
     \fi
     \GRAPHIC{#4}{#3}{#1}{#2}{\z@}
     \ifUnderFrame 
        \nobreak\par #5\QCBOptA
     \fi
 \end{center}%
 }%
%
\def\FFRAME#1#2#3#4#5#6#7{%
 \begin{figure}[#1]%
  \let\QCTOptA\empty
  \let\QCTOptB\empty
  \let\QCBOptA\empty
  \let\QCBOptB\empty
  \ifOverFrame
    #4
    \ifx\QCTOptA\empty
    \else
      \ifx\QCTOptB\empty
        \caption{\QCTOptA}%
      \else
        \caption[\QCTOptB]{\QCTOptA}%
      \fi
    \fi
    \ifUnderFrame\else
      \label{#5}%
    \fi
  \else
    \UnderFrametrue%
  \fi
  \begin{center}\GRAPHIC{#7}{#6}{#2}{#3}{\z@}\end{center}%
  \ifUnderFrame
    #4
    \ifx\QCBOptA\empty
      \caption{}%
    \else
      \ifx\QCBOptB\empty
        \caption{\QCBOptA}%
      \else
        \caption[\QCBOptB]{\QCBOptA}%
      \fi
    \fi
    \label{#5}%
  \fi
  \end{figure}%
 }%
%
%
%
%
%
\newcount\dispkind%

\def\makeactives{
  \catcode`\"=\active
  \catcode`\;=\active
  \catcode`\:=\active
  \catcode`\'=\active
  \catcode`\~=\active
}
\bgroup
   \makeactives
   \gdef\activesoff{%
      \def"{\string"}
      \def;{\string;}
      \def:{\string:}
      \def'{\string'}
      \def~{\string~}
    }
\egroup

\def\FRAME#1#2#3#4#5#6#7#8{%
 \bgroup
 \@ifundefined{bbl@deactivate}{}{\activesoff}
 \ifnum\draft=\@ne
   \wasdrafttrue
 \else
   \wasdraftfalse%
 \fi
 \def\LaTeXparams{}%
 \dispkind=\z@
 \def\LaTeXparams{}%
 \doFRAMEparams{#1}%
 \ifnum\dispkind=\z@\IFRAME{#2}{#3}{#4}{#7}{#8}{#5}\else
  \ifnum\dispkind=\@ne\DFRAME{#2}{#3}{#7}{#8}{#5}\else
   \ifnum\dispkind=\tw@
    \edef\@tempa{\noexpand\FFRAME{\LaTeXparams}}%
    \@tempa{#2}{#3}{#5}{#6}{#7}{#8}%
    \fi
   \fi
  \fi
  \ifwasdraft\draft=1\else\draft=0\fi{}%
  \egroup
 }%
%

\def\TEXUX#1{"texux"}

%
%
%
%
%
%
%
%
%

%
\long\def\QQQ#1#2{%
     \long\expandafter\def\csname#1\endcsname{#2}}%
\@ifundefined{QTP}{\def\QTP#1{}}{}
\@ifundefined{QEXCLUDE}{\def\QEXCLUDE#1{}}{}
\@ifundefined{Qlb}{}{}
\@ifundefined{Qlt}{}{}
\long\def\QQA#1#2{}%
\def\QTR#1#2{{\csname#1\endcsname #2}}
\def\EXPAND#1[#2]#3{}%
\def\NOEXPAND#1[#2]#3{}%
\def\LaTeXparent#1{}%
\def\ChildStyles#1{}%
\def\ChildDefaults#1{}%
\def\QTagDef#1#2#3{}%
%
\@ifundefined{StyleEditBeginDoc}{}{}
%
\def\QQfnmark#1{\footnotemark}

%
\def\makeatletter\input gnuindex.sty\makeatother\makeindex{\makeatletter\input gnuindex.sty\makeatother\makeindex}%
\@ifundefined{INDEX}{\def\INDEX#1#2{}{}}{}%
\@ifundefined{SUBINDEX}{\def\SUBINDEX#1#2#3{}{}{}}{}%
\@ifundefined{initial}%
   {\def\initial#1{\bigbreak{\raggedright\large\bf #1}\kern 2\p@\penalty3000}}%
   {}%
\@ifundefined{entry}{}{}%
\@ifundefined{primary}{}{}%
\@ifundefined{secondary}{}{}%
\@ifundefined{ZZZ}{}{\makeatletter\input gnuindex.sty\makeatother\makeindex\makeatletter}%
%
\@ifundefined{abstract}{%
 \def\abstract{%
  \if@twocolumn
   \section*{Abstract (Not appropriate in this style!)}%
   \else \small 
   \begin{center}{\bf Abstract\vspace{-.5em}\vspace{\z@}}\end{center}%
   \quotation 
   \fi
  }%
 }{%
 }%
\@ifundefined{endabstract}{\def\endabstract
  {\if@twocolumn\else\endquotation\fi}}{}%
\@ifundefined{maketitle}{\def\maketitle#1{}}{}%
\@ifundefined{affiliation}{\def\affiliation#1{}}{}%
\@ifundefined{proof}{}{}%
\@ifundefined{endproof}{}{}%
\@ifundefined{newfield}{\def\newfield#1#2{}}{}%
\@ifundefined{chapter}{\def\chapter#1{\par(Chapter head:)#1\par }%
 \newcount\c@chapter}{}%
\@ifundefined{part}{\def\part#1{\par(Part head:)#1\par }}{}%
\@ifundefined{section}{\def\section#1{\par(Section head:)#1\par }}{}%
\@ifundefined{subsection}{\def\subsection#1%
 {\par(Subsection head:)#1\par }}{}%
\@ifundefined{subsubsection}{\def\subsubsection#1%
 {\par(Subsubsection head:)#1\par }}{}%
\@ifundefined{paragraph}{\def\paragraph#1%
 {\par(Subsubsubsection head:)#1\par }}{}%
\@ifundefined{subparagraph}{\def\subparagraph#1%
 {\par(Subsubsubsubsection head:)#1\par }}{}%
\@ifundefined{therefore}{}{}%
\@ifundefined{backepsilon}{}{}%
\@ifundefined{yen}{}{}%
\@ifundefined{registered}{%
   \def\registered{\relax\ifmmode{}\r@gistered
                    \else$\m@th\r@gistered$\fi}%
 \def\r@gistered{^{\ooalign
  {\hfil\raise.07ex\hbox{$\scriptstyle\rm\text{R}$}\hfil\crcr
  \mathhexbox20D}}}}{}%
\@ifundefined{Eth}{}{}%
\@ifundefined{eth}{}{}%
\@ifundefined{Thorn}{}{}%
\@ifundefined{thorn}{}{}%
%
\@ifundefined{degree}{}{}%
%
\newdimen\theight
\def\Column{%
 \vadjust{\setbox\z@=\hbox{\scriptsize\quad\quad tcol}%
  \theight=\ht\z@\advance\theight by \dp\z@\advance\theight by \lineskip
  \kern -\theight \vbox to \theight{%
   \rightline{\rlap{\box\z@}}%
   \vss
   }%
  }%
 }%
\def\qed{%
 \ifhmode\unskip\nobreak\fi\ifmmode\ifinner\else\hskip5\p@\fi\fi
 \hbox{\hskip5\p@\vrule width4\p@ height6\p@ depth1.5\p@\hskip\p@}%
 }%
\def\miss{\hbox{\vrule height2\p@ width 2\p@ depth\z@}}%
%
%
\def\tcol#1{{\baselineskip=6\p@ \vcenter{#1}} \Column}  %
%
%
%
%
%

\def\newfmtname{LaTeX2e}
\def\chkcompat{%
   \if@compatibility
   \else
     \usepackage{latexsym}
   \fi
}

\ifx\fmtname\newfmtname
  \DeclareOldFontCommand{\rm}{\normalfont\rmfamily}{\mathrm}
  \DeclareOldFontCommand{\sf}{\normalfont\sffamily}{\mathsf}
  \DeclareOldFontCommand{\tt}{\normalfont\ttfamily}{\mathtt}
  \DeclareOldFontCommand{\bf}{\normalfont\bfseries}{\mathbf}
  \DeclareOldFontCommand{\it}{\normalfont\itshape}{\mathit}
  \DeclareOldFontCommand{\sl}{\normalfont\slshape}{\@nomath\sl}
  \DeclareOldFontCommand{\sc}{\normalfont\scshape}{\@nomath\sc}
  \chkcompat
\fi

%

\def\alpha{\Greekmath 010B }%
\def\beta{\Greekmath 010C }%
\def\gamma{\Greekmath 010D }%
\def\delta{\Greekmath 010E }%
\def\epsilon{\Greekmath 010F }%
\def\zeta{\Greekmath 0110 }%
\def\eta{\Greekmath 0111 }%
\def\theta{\Greekmath 0112 }%
\def\iota{\Greekmath 0113 }%
\def\kappa{\Greekmath 0114 }%
\def\lambda{\Greekmath 0115 }%
\def\mu{\Greekmath 0116 }%
\def\nu{\Greekmath 0117 }%
\def\xi{\Greekmath 0118 }%
\def\pi{\Greekmath 0119 }%
\def\rho{\Greekmath 011A }%
\def\sigma{\Greekmath 011B }%
\def\tau{\Greekmath 011C }%
\def\upsilon{\Greekmath 011D }%
\def\phi{\Greekmath 011E }%
\def\chi{\Greekmath 011F }%
\def\psi{\Greekmath 0120 }%
\def\omega{\Greekmath 0121 }%
\def\varepsilon{\Greekmath 0122 }%
\def\vartheta{\Greekmath 0123 }%
\def\varpi{\Greekmath 0124 }%
\def\varrho{\Greekmath 0125 }%
\def\varsigma{\Greekmath 0126 }%
\def\varphi{\Greekmath 0127 }%

\def\nabla{\Greekmath 0272 }
\def\FindBoldGroup{%
   {\setbox0=\hbox{$\mathbf{x\global\edef\theboldgroup{\the\mathgroup}}$}}%
}

\def\Greekmath#1#2#3#4{%
    \if@compatibility
        \ifnum\mathgroup=\symbold
           \mathchoice{\mbox{\boldmath$\displaystyle\mathchar"#1#2#3#4$}}%
                      {\mbox{\boldmath$\textstyle\mathchar"#1#2#3#4$}}%
                      {\mbox{\boldmath$\scriptstyle\mathchar"#1#2#3#4$}}%
                      {\mbox{\boldmath$\scriptscriptstyle\mathchar"#1#2#3#4$}}%
        \else
           \mathchar"#1#2#3#4%
        \fi 
    \else 
        \FindBoldGroup
        \ifnum\mathgroup=\theboldgroup 
           \mathchoice{\mbox{\boldmath$\displaystyle\mathchar"#1#2#3#4$}}%
                      {\mbox{\boldmath$\textstyle\mathchar"#1#2#3#4$}}%
                      {\mbox{\boldmath$\scriptstyle\mathchar"#1#2#3#4$}}%
                      {\mbox{\boldmath$\scriptscriptstyle\mathchar"#1#2#3#4$}}%
        \else
           \mathchar"#1#2#3#4%
        \fi     	    
	  \fi}

\newif\ifGreekBold  \GreekBoldfalse
\let\SAVEPBF=\pbf
\def\pbf{\GreekBoldtrue\SAVEPBF}%

\@ifundefined{theorem}{\newtheorem{theorem}{Theorem}}{}
\@ifundefined{lemma}{}{}
\@ifundefined{corollary}{}{}
\@ifundefined{conjecture}{}{}
\@ifundefined{proposition}{\newtheorem{proposition}[theorem]{Proposition}}{}
\@ifundefined{axiom}{}{}
\@ifundefined{remark}{\newtheorem{remark}{Remark}}{}
\@ifundefined{example}{}{}
\@ifundefined{exercise}{}{}
\@ifundefined{definition}{}{}

\@ifundefined{mathletters}{%
  \newcounter{equationnumber}  
  \def\mathletters{%
     \addtocounter{equation}{1}
     \edef\@currentlabel{\theequation}%
     \setcounter{equationnumber}{\c@equation}
     \setcounter{equation}{0}%
     \edef\theequation{\@currentlabel\noexpand\alph{equation}}%
  }
  
}{}

\@ifundefined{BibTeX}{%
    \def\BibTeX{{\rm B\kern-.05em{\sc i\kern-.025em b}\kern-.08em
                 T\kern-.1667em\lower.7ex\hbox{E}\kern-.125emX}}}{}%
\@ifundefined{AmS}%
    {\def\AmS{{\protect\usefont{OMS}{cmsy}{m}{n}%
                A\kern-.1667em\lower.5ex\hbox{M}\kern-.125emS}}}{}%
\@ifundefined{AmSTeX}{}{}%
%

%
%
\ifx\ds@amstex\relax
   \message{amstex already loaded}\makeatother 
\else
   \@ifpackageloaded{amstex}%
      {\message{amstex already loaded}\makeatother }
      {}
   \@ifpackageloaded{amsgen}%
      {\message{amsgen already loaded}\makeatother }
      {}
\fi
%
%
%
%
\let\DOTSI\relax
\def\RIfM@{\relax\ifmmode}%
\def\FN@{\futurelet\next}%
\newcount\intno@
\def\iint{\DOTSI\intno@\tw@\FN@\ints@}%
\def\iiint{\DOTSI\intno@\thr@@\FN@\ints@}%
\def\iiiint{\DOTSI\intno@4 \FN@\ints@}%
\def\idotsint{\DOTSI\intno@\z@\FN@\ints@}%
\def\ints@{\findlimits@\ints@@}%
\newif\iflimtoken@
\newif\iflimits@
\def\findlimits@{\limtoken@true\ifx\next\limits\limits@true
 \else\ifx\next\nolimits\limits@false\else
 \limtoken@false\ifx\ilimits@\nolimits\limits@false\else
 \ifinner\limits@false\else\limits@true\fi\fi\fi\fi}%
\def\multint@{\int\ifnum\intno@=\z@\intdots@                          
 \else\intkern@\fi                                                    
 \ifnum\intno@>\tw@\int\intkern@\fi                                   
 \ifnum\intno@>\thr@@\int\intkern@\fi                                 
 \int}
\def\multintlimits@{\intop\ifnum\intno@=\z@\intdots@\else\intkern@\fi
 \ifnum\intno@>\tw@\intop\intkern@\fi
 \ifnum\intno@>\thr@@\intop\intkern@\fi\intop}%
\def\intic@{%
    \mathchoice{\hskip.5em}{\hskip.4em}{\hskip.4em}{\hskip.4em}}%
\def\negintic@{\mathchoice
 {\hskip-.5em}{\hskip-.4em}{\hskip-.4em}{\hskip-.4em}}%
\def\ints@@{\iflimtoken@                                              
 \def\ints@@@{\iflimits@\negintic@
   \mathop{\intic@\multintlimits@}\limits                             
  \else\multint@\nolimits\fi                                          
  \eat@}
 \else                                                                
 \def\ints@@@{\iflimits@\negintic@
  \mathop{\intic@\multintlimits@}\limits\else
  \multint@\nolimits\fi}\fi\ints@@@}%
\def\intkern@{\mathchoice{\!\!\!}{\!\!}{\!\!}{\!\!}}%
\def\plaincdots@{\mathinner{\cdotp\cdotp\cdotp}}%
\def\intdots@{\mathchoice{\plaincdots@}%
 {{\cdotp}\mkern1.5mu{\cdotp}\mkern1.5mu{\cdotp}}%
 {{\cdotp}\mkern1mu{\cdotp}\mkern1mu{\cdotp}}%
 {{\cdotp}\mkern1mu{\cdotp}\mkern1mu{\cdotp}}}%
%
%
%
\def\RIfM@{\relax\protect\ifmmode}
\def\text{\RIfM@\expandafter\text@\else\expandafter\mbox\fi}
\let\nfss@text\text
\def\text@#1{\mathchoice
   {\textdef@\displaystyle\f@size{#1}}%
   {\textdef@\textstyle\tf@size{\firstchoice@false #1}}%
   {\textdef@\textstyle\sf@size{\firstchoice@false #1}}%
   {\textdef@\textstyle \ssf@size{\firstchoice@false #1}}%
   \glb@settings}

\def\textdef@#1#2#3{\hbox{{%
                    \everymath{#1}%
                    \let\f@size#2\selectfont
                    #3}}}
\newif\iffirstchoice@
\firstchoice@true
%
%
%
%
%
\def\Let@{\relax\iffalse{\fi\let\\=\cr\iffalse}\fi}%
\def\vspace@{\def\vspace##1{\crcr\noalign{\vskip##1\relax}}}%
\def\multilimits@{\bgroup\vspace@\Let@
 \baselineskip\fontdimen10 \scriptfont\tw@
 \advance\baselineskip\fontdimen12 \scriptfont\tw@
 \lineskip\thr@@\fontdimen8 \scriptfont\thr@@
 \lineskiplimit\lineskip
 \vbox\bgroup\ialign\bgroup\hfil$\m@th\scriptstyle{##}$\hfil\crcr}%
\def\Sb{_\multilimits@}%
\def\endSb{\crcr\egroup\egroup\egroup}%
\def\Sp{^\multilimits@}%

%
%
%
\newdimen\ex@
\ex@.2326ex
\def\rightarrowfill@#1{$#1\m@th\mathord-\mkern-6mu\cleaders
 \hbox{$#1\mkern-2mu\mathord-\mkern-2mu$}\hfill
 \mkern-6mu\mathord\rightarrow$}%
\def\leftarrowfill@#1{$#1\m@th\mathord\leftarrow\mkern-6mu\cleaders
 \hbox{$#1\mkern-2mu\mathord-\mkern-2mu$}\hfill\mkern-6mu\mathord-$}%
\def\leftrightarrowfill@#1{$#1\m@th\mathord\leftarrow
\mkern-6mu\cleaders
 \hbox{$#1\mkern-2mu\mathord-\mkern-2mu$}\hfill
 \mkern-6mu\mathord\rightarrow$}%
\def\overrightarrow{\mathpalette\overrightarrow@}%
\def\overrightarrow@#1#2{\vbox{\ialign{##\crcr\rightarrowfill@#1\crcr
 \noalign{\kern-\ex@\nointerlineskip}$\m@th\hfil#1#2\hfil$\crcr}}}%

\def\overleftarrow{\mathpalette\overleftarrow@}%
\def\overleftarrow@#1#2{\vbox{\ialign{##\crcr\leftarrowfill@#1\crcr
 \noalign{\kern-\ex@\nointerlineskip}$\m@th\hfil#1#2\hfil$\crcr}}}%
\def\overleftrightarrow{\mathpalette\overleftrightarrow@}%
\def\overleftrightarrow@#1#2{\vbox{\ialign{##\crcr
   \leftrightarrowfill@#1\crcr
 \noalign{\kern-\ex@\nointerlineskip}$\m@th\hfil#1#2\hfil$\crcr}}}%
\def\underrightarrow{\mathpalette\underrightarrow@}%
\def\underrightarrow@#1#2{\vtop{\ialign{##\crcr$\m@th\hfil#1#2\hfil
  $\crcr\noalign{\nointerlineskip}\rightarrowfill@#1\crcr}}}%

\def\underleftarrow{\mathpalette\underleftarrow@}%
\def\underleftarrow@#1#2{\vtop{\ialign{##\crcr$\m@th\hfil#1#2\hfil
  $\crcr\noalign{\nointerlineskip}\leftarrowfill@#1\crcr}}}%
\def\underleftrightarrow{\mathpalette\underleftrightarrow@}%
\def\underleftrightarrow@#1#2{\vtop{\ialign{##\crcr$\m@th
  \hfil#1#2\hfil$\crcr
 \noalign{\nointerlineskip}\leftrightarrowfill@#1\crcr}}}%


\def\qopnamewl@#1{\mathop{\operator@font#1}\nlimits@}
\let\nlimits@\displaylimits
\def\setboxz@h{\setbox\z@\hbox}

\def\varlim@#1#2{\mathop{\vtop{\ialign{##\crcr
 \hfil$#1\m@th\operator@font lim$\hfil\crcr
 \noalign{\nointerlineskip}#2#1\crcr
 \noalign{\nointerlineskip\kern-\ex@}\crcr}}}}

 \def\rightarrowfill@#1{\m@th\setboxz@h{$#1-$}\ht\z@\z@
  $#1\copy\z@\mkern-6mu\cleaders
  \hbox{$#1\mkern-2mu\box\z@\mkern-2mu$}\hfill
  \mkern-6mu\mathord\rightarrow$}
\def\leftarrowfill@#1{\m@th\setboxz@h{$#1-$}\ht\z@\z@
  $#1\mathord\leftarrow\mkern-6mu\cleaders
  \hbox{$#1\mkern-2mu\copy\z@\mkern-2mu$}\hfill
  \mkern-6mu\box\z@$}

\def\projlim{\qopnamewl@{proj\,lim}}
\def\injlim{\qopnamewl@{inj\,lim}}
\def\varinjlim{\mathpalette\varlim@\rightarrowfill@}
\def\varprojlim{\mathpalette\varlim@\leftarrowfill@}
\def\varliminf{\mathpalette\varliminf@{}}
\def\varliminf@#1{\mathop{\underline{\vrule\@depth.2\ex@\@width\z@
   \hbox{$#1\m@th\operator@font lim$}}}}
\def\varlimsup{\mathpalette\varlimsup@{}}
\def\varlimsup@#1{\mathop{\overline
  {\hbox{$#1\m@th\operator@font lim$}}}}

%
%
%
%
%
%
%
%
%
%
%
%
%
%
%
%
%
%
%
%
%
%
%

%
%
%
%
%
%
%
%
%
%
%
%
%
%
%
%
%
%
%
%
%
%

%
%
%
%
%
%
%
%
%
%
%
%
%
%
%
%
%
%
%
%
%
%
%
%
\begingroup \catcode `|=0 \catcode `[= 1
\catcode`]=2 \catcode `\{=12 \catcode `\}=12
\catcode`\\=12 
|gdef|@alignverbatim#1\end{align}[#1|end[align]]
|gdef|@salignverbatim#1\end{align*}[#1|end[align*]]

|gdef|@alignatverbatim#1\end{alignat}[#1|end[alignat]]
|gdef|@salignatverbatim#1\end{alignat*}[#1|end[alignat*]]

|gdef|@xalignatverbatim#1\end{xalignat}[#1|end[xalignat]]
|gdef|@sxalignatverbatim#1\end{xalignat*}[#1|end[xalignat*]]

|gdef|@gatherverbatim#1\end{gather}[#1|end[gather]]
|gdef|@sgatherverbatim#1\end{gather*}[#1|end[gather*]]

|gdef|@gatherverbatim#1\end{gather}[#1|end[gather]]
|gdef|@sgatherverbatim#1\end{gather*}[#1|end[gather*]]

|gdef|@multilineverbatim#1\end{multiline}[#1|end[multiline]]
|gdef|@smultilineverbatim#1\end{multiline*}[#1|end[multiline*]]

|gdef|@arraxverbatim#1\end{arrax}[#1|end[arrax]]
|gdef|@sarraxverbatim#1\end{arrax*}[#1|end[arrax*]]

|gdef|@tabulaxverbatim#1\end{tabulax}[#1|end[tabulax]]
|gdef|@stabulaxverbatim#1\end{tabulax*}[#1|end[tabulax*]]

|endgroup

\def\align{\@verbatim \frenchspacing\@vobeyspaces \@alignverbatim
You are using the "align" environment in a style in which it is not defined.}

\@namedef{align*}{\@verbatim\@salignverbatim
You are using the "align*" environment in a style in which it is not defined.}
\expandafter\let\csname endalign*\endcsname =\endtrivlist

\def\alignat{\@verbatim \frenchspacing\@vobeyspaces \@alignatverbatim
You are using the "alignat" environment in a style in which it is not defined.}

\@namedef{alignat*}{\@verbatim\@salignatverbatim
You are using the "alignat*" environment in a style in which it is not defined.}
\expandafter\let\csname endalignat*\endcsname =\endtrivlist

\def\xalignat{\@verbatim \frenchspacing\@vobeyspaces \@xalignatverbatim
You are using the "xalignat" environment in a style in which it is not defined.}

\@namedef{xalignat*}{\@verbatim\@sxalignatverbatim
You are using the "xalignat*" environment in a style in which it is not defined.}
\expandafter\let\csname endxalignat*\endcsname =\endtrivlist

\def\gather{\@verbatim \frenchspacing\@vobeyspaces \@gatherverbatim
You are using the "gather" environment in a style in which it is not defined.}

\@namedef{gather*}{\@verbatim\@sgatherverbatim
You are using the "gather*" environment in a style in which it is not defined.}
\expandafter\let\csname endgather*\endcsname =\endtrivlist

\def\multiline{\@verbatim \frenchspacing\@vobeyspaces \@multilineverbatim
You are using the "multiline" environment in a style in which it is not defined.}

\@namedef{multiline*}{\@verbatim\@smultilineverbatim
You are using the "multiline*" environment in a style in which it is not defined.}
\expandafter\let\csname endmultiline*\endcsname =\endtrivlist

\def\arrax{\@verbatim \frenchspacing\@vobeyspaces \@arraxverbatim
You are using a type of "array" construct that is only allowed in AmS-LaTeX.}

\def\tabulax{\@verbatim \frenchspacing\@vobeyspaces \@tabulaxverbatim
You are using a type of "tabular" construct that is only allowed in AmS-LaTeX.}

\@namedef{arrax*}{\@verbatim\@sarraxverbatim
You are using a type of "array*" construct that is only allowed in AmS-LaTeX.}
\expandafter\let\csname endarrax*\endcsname =\endtrivlist

\@namedef{tabulax*}{\@verbatim\@stabulaxverbatim
You are using a type of "tabular*" construct that is only allowed in AmS-LaTeX.}
\expandafter\let\csname endtabulax*\endcsname =\endtrivlist


\def\@@eqncr{\let\@tempa\relax
    \ifcase\@eqcnt \def\@tempa{& & &}\or \def\@tempa{& &}%
      \else \def\@tempa{&}\fi
     \@tempa
     \if@eqnsw
        \iftag@
           \@taggnum
        \else
           \@eqnnum\stepcounter{equation}%
        \fi
     \fi
     \global\tag@false
     \global\@eqnswtrue
     \global\@eqcnt\z@\cr}

 \def\endequation{%
     \ifmmode\ifinner 
      \iftag@
        \addtocounter{equation}{-1} 
        $\hfil
           \displaywidth\linewidth\@taggnum\egroup \endtrivlist
        \global\tag@false
        \global\@ignoretrue   
      \else
        $\hfil
           \displaywidth\linewidth\@eqnnum\egroup \endtrivlist
        \global\tag@false
        \global\@ignoretrue 
      \fi
     \else   
      \iftag@
        \addtocounter{equation}{-1} 
        \eqno \hbox{\@taggnum}
        \global\tag@false%
        $$\global\@ignoretrue
      \else
        \eqno \hbox{\@eqnnum}
        $$\global\@ignoretrue
      \fi
     \fi\fi
 } 

 \newif\iftag@ \tag@false
 
 \def\tag{\@ifnextchar*{\@tagstar}{\@tag}}
 \def\@tag#1{%
     \global\tag@true
     \global\def\@taggnum{(#1)}}
 \def\@tagstar*#1{%
     \global\tag@true
     \global\def\@taggnum{#1}%
}


\makeatother

\newcommand{\R}{{\mathbb{R}}}
\newcommand{\C}{{\mathbb{C}}}

\begin{document}
\title[Four operations of fractions and their relations]{A class of fractions adding how pupils wish\\
}
\author{Martin Himmel}
\email{martin.himmel@gmail.com}

\begin{abstract}
Motivated by children inventing creatively laws on how to add or multiply fractions,
 we introduce four fraction operations and discover some of their properties.
\end{abstract}

\maketitle

\QTP{Body Math}
$\bigskip $\footnotetext{\textit{2022 Mathematics Subject Classification. }%
Primary: 00A05, 00A06.
\par
\textit{Keywords and phrases:} fraction operation, homogeneity, commutativity.}

\section{Introduction}
Let $x,y$ be two fractions, 
i.e., there are integers $a,b, \alpha, \beta$, those referring to the denominators $b, \beta$ different from zero, such that
$x=\frac{a}{b}$, $y=\frac{\alpha}{\beta}$.
Addition of fractions is defined in this way:
\begin{align}
\frac{a}{b} + \frac{\alpha}{\beta} := \frac{a \beta + \alpha b}{b \beta},
\label{addition}
\end{align} 
whereas multiplication reads as
\begin{align}
\frac{a}{b} \cdot \frac{\alpha}{\beta} := \frac{a  \alpha }{b \beta}
\label{multiplication}.
\end{align} 

For pupils at school it is not easy to remember those rules 
and teachers usually see plenty of variations of these laws during their professional work life
three of which we take a closer look in this paper.

For some pedagogigal remarks on fraction operation see \cite{GaDi}, \cite{PuIs} and \cite{SaPu}.

\subsection*{Dual Multiplication}
The first comes from the situation when someone wishes to add fractions in the same manner as they are
multiplied,
namely
\begin{align*}
	\frac{a}{b}  \odot \frac{\alpha}{\beta} := \frac{a  + \alpha }{b + \beta}.
\end{align*} 
We will refer to this binary operation as dual mulplication
 since it is obtained from the multiplication law \eqref{multiplication} when $\cdot$ is replaced by $+$.
 Moreover, we have $x \oplus x =x$ for all fractions $x$, thus $\oplus$ is an idempotent operation.

 \subsection*{Dual Addition}
 The second two operations are motivated similarly from the addition law \eqref{addition}
 on replacing $+$ by $\cdot$ and vice versa.
 Thus,
 \begin{align*}
 	\frac{a}{b}  \oplus \frac{\alpha}{\beta} :=  \frac{a + \beta \cdot \alpha + b}{b + \beta}.
 \end{align*} 
We will refer to this binary operation $\oplus$ as dual addition of first kind,
 since it is obtained from the addition law \eqref{addition} when $\cdot$ is exchanged with $+$ and vice versa.
 
 Similarly, we call 
  \begin{align*}
 	\frac{a}{b}  \boxplus  \frac{\alpha}{\beta} := \frac{ \left( a + \beta \right) \cdot \left(\alpha + b \right)}{b + \beta}.
 \end{align*} 
the dual fraction sum of second kind.
In a nutshel, the operation $\boxplus$ is obtained from $\oplus$ when using the former way of putting brackets (namely the inherited associativity by \eqref{addition}).
In contrast to  the usual operations for fractions  together with dual multiplication
the here newly introduced additions do not always commute.
More precisely, for fractions $x=\frac{a}{b}$ and $y=\frac{\alpha}{\beta}$ we have:
 $x \boxplus y=y \boxplus x$ 
iff $a (b-1)-b=\alpha (\beta -1) - \beta$,
and $x \oplus y=y \oplus x$ iff $\alpha + \beta (\alpha -1)= a +b (a-1)$.

Thus, in the case 
\begin{itemize}
	\item
	$b=\beta=1$ where $x$ and $y$ are integers: dual addition of second kind commutes iff $a=\alpha$;
	\item
	$a=\alpha=1$ where $x$ and $y$ are Barbilonian fractions: $\oplus$ commutes iff $b=\beta$.
\end{itemize}

It is notweworthy that the dual sum of second kind of 'zero fractions' equals the harmonic mean of their denominators, i.e.
\begin{align*}
\frac{0}{b}	\boxplus \frac{0}{\beta}=\frac{1}{\frac{1}{b} + \frac{1}{\beta}}
= H(b,\beta),
\end{align*}
where $H: (0, +\infty)^2 \to (0, +\infty)$ defined by $H(b,\beta)=\frac{2 b \beta}{b+\beta}$ is the harmonic mean.

Unsurprisingly  these newly introduced operations do not always agree with the usual operations of fractions, namely with addition and multiplication, respectively.
Nevertheless,  it is of great interest to know for which classes of fractions these operationss give the same result.
More concretely, in this note
 we answer the question for which fractions it holds 
 $x+y=x  \cdot y$, $x+y=x \oplus y$ or $x+y=x \boxplus y$, respectively,
or, more generally, when they are proportional.

\section{Four operations of fractions and their relations}

It would be appreciated very much by most pupils if addition of fractions would be defined 'component wisely' and
thus
in the following way:
\begin{equation*}
	\frac{a}{b} \odot \frac{\alpha}{\beta}:=\frac{a + \alpha }{b + \beta}.
\end{equation*}
We will refer to this operation as dual multiplication or pupil fraction addition%
\footnote{As a mathematician one has to check that the here newly defined operation $\odot$ is indeed well-defined
Since in a fraction numerator and denominator are not uniquely defined, because $\frac{a}{b}=\frac{at}{bt}$ for any non-zero $t$ (Kuezbarkeit), this property - called zero homogeneity - must be inherited by the here introduced operation. The reader may verify that $\frac{ta}{tb} \odot \frac{s \alpha }{s \beta} = \frac{a}{b} \odot \frac{\alpha }{\beta}$ iff $s=t$ or $\frac{a}{b} = \frac{\alpha }{\beta}$ ("Pupil sum is only diagonaly well-defined.").}
.

In general, of course, this so defined addition does not agree with usual addition of fractions, 
but it is of high interest to know
when both addition laws agree, or, more generally, when they are proportional.

\subsection{Addition}
We deal with this problem in the following
\begin{proposition}
	Let  $a,b, \alpha, \beta \in \R$, $b \beta \neq 0$. Then we have:
	\begin{equation*}
		\frac{a}{b} \odot \frac{\alpha}{\beta}=	\frac{a}{b} + \frac{\alpha}{\beta}
	\end{equation*}	
	if, and only if,
	\begin{equation*}
		\alpha b^2  + a \beta^2 = 0.
	\end{equation*}	
\end{proposition}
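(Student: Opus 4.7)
The plan is a direct forward computation of the stated biconditional, by reducing both sides to a common denominator and cancelling.

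First, I would unfold the two sides using their definitions. The left-hand side is $\frac{a+\alpha}{b+\beta}$ (tacitly assuming $b+\beta \neq 0$, without which $\odot$ would not be defined), and the right-hand side is $\frac{a\beta + \alpha b}{b\beta}$, which is well defined since $b\beta \neq 0$ by hypothesis. Thus the identity to analyse is
\begin{equation*}
\frac{a+\alpha}{b+\beta} \;=\; \frac{a\beta + \alpha b}{b\beta}.
\end{equation*}

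Next I would clear denominators, obtaining the polynomial equation $(a+\alpha)\, b\beta = (a\beta + \alpha b)(b+\beta)$. Expanding the right-hand side gives $ab\beta + a\beta^{2} + \alpha b^{2} + \alpha b\beta$, while the left-hand side is simply $ab\beta + \alpha b\beta$. Subtracting the common terms $ab\beta + \alpha b\beta$ from both sides leaves $0 = a\beta^{2} + \alpha b^{2}$, which is exactly the condition claimed. Since every step is reversible (cross-multiplication is valid because $b\beta(b+\beta) \neq 0$), this yields the biconditional.

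The only subtle point, and therefore the main obstacle such as it is, concerns the boundary case $b + \beta = 0$. Here the left-hand side $\frac{a+\alpha}{b+\beta}$ is not defined, so the proposition should be read with the implicit proviso $b+\beta \neq 0$; I would record this convention in a single opening sentence before carrying out the algebra. Apart from this mild bookkeeping, the proof is entirely routine polynomial manipulation with no further difficulty.
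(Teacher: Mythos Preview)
Your proposal is correct and follows essentially the same route as the paper: unfold the definitions, cross-multiply by $b\beta(b+\beta)$, expand and cancel to arrive at $\alpha b^{2}+a\beta^{2}=0$. The only minor difference is that for the converse the paper substitutes $\frac{a}{b}=-\frac{b}{\beta}\cdot\frac{\alpha}{\beta}$ and verifies both sides explicitly, whereas you simply invoke reversibility of the cross-multiplication; your version is cleaner, and your explicit remark on the implicit proviso $b+\beta\neq 0$ is a point the paper leaves unmentioned.
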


\begin{proof}
	Assume that pupil addition gives the same result as usual addition,
	namely 
	\begin{equation*}
		\frac{a+\alpha}{b+\beta}=\frac{a \beta + \alpha b}{b \beta}.
	\end{equation*}
	Multiplying here both sides by $b \beta (b+\beta)$ yields
	\begin{equation*}
		\left(a+\alpha \right) b \beta = \left( a \beta + \alpha b\right) \left( b+\beta \right), 
	\end{equation*}
	and thus
	\begin{equation*}
		a b \beta+\alpha b \beta = a \beta b + \alpha b^2  + a \beta^2 + \alpha b \beta
	\end{equation*}
	simplifying to
	\begin{equation*}
		\alpha b^2  + a \beta^2 = 0.
	\end{equation*}
Vice versa, let us assume this relation.
	Since $b$ and $\beta$ are denominators of fractions,
	  both $b$ and $\beta$ are different from zero.
	Hence,
	\begin{equation*}
		\frac{a}{b}=\frac{-b}{\beta}\frac{\alpha}{\beta},
	\end{equation*}
	which means that the first fraction is the $\frac{-b}{\beta}$-multiple of the second.
	We have
	\begin{eqnarray*}
	\frac{a}{b}+\frac{\alpha}{\beta}
	&=&
	\frac{-b}{\beta}\frac{\alpha}{\beta}+\frac{\alpha \beta}{\beta^2}\\
	&=&
		\frac{\alpha (\beta -b)}{\beta^2}\\
		&=&
	\frac{a}{b} \odot \frac{\alpha}{\beta}	\\
	&=&
\frac{-b}{\beta}\frac{\alpha}{\beta}\odot \frac{\alpha \beta}{\beta^2}	\\
	&=&
	\frac{\alpha (\beta -b)}{\beta^2}.
	\end{eqnarray*}
This concludes the proof.
\end{proof}

For sake of better memorization we note 
\begin{remark}
	Sum and dual product ( $=$ pupil sum) give the same result for fractions 
	whose numerators $a$ and $\alpha$ lie on the decreasing line $a=\left(i \frac{b}{\beta} \right)^2 \alpha$ where $i^2=-1$.
\end{remark}

On the other hand, we ask for sake of curiosity when addition gives the same result as multiplication.
\begin{proposition}
	Let  $a,b, \alpha, \beta \in \R$, $b \beta \neq 0$. The equality 
	\begin{equation*}
		\frac{a}{b} + \frac{\alpha}{\beta}=	\frac{a}{b} \cdot \frac{\alpha}{\beta}
	\end{equation*}	
	holds true if, and only if,
	\begin{equation*}
		\frac{a}{b} =\frac{\alpha }{ \alpha - \beta}.
	\end{equation*}	
\end{proposition}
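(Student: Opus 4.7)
The plan is to reduce the stated equality to a polynomial identity in $a,b,\alpha,\beta$ by clearing the common denominator $b\beta$, then solve for $a/b$.

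First I would rewrite the left-hand and right-hand sides using the definitions \eqref{addition} and \eqref{multiplication}, obtaining
\begin{equation*}
\frac{a\beta + \alpha b}{b\beta} = \frac{a \alpha}{b\beta}.
\end{equation*}
Since $b\beta \neq 0$, I can multiply through to get the equivalent identity
\begin{equation*}
a\beta + \alpha b = a\alpha,
\end{equation*}
which rearranges to $a(\alpha - \beta) = \alpha b$. All the implications so far are reversible, so the claim is reduced to showing that this last equation is equivalent to $\frac{a}{b} = \frac{\alpha}{\alpha - \beta}$.

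The direction from $\frac{a}{b} = \frac{\alpha}{\alpha - \beta}$ to $a(\alpha - \beta) = \alpha b$ is an immediate cross-multiplication (assuming $\alpha - \beta \neq 0$, which is built into the right-hand side of the claimed formula). For the converse, starting from $a(\alpha - \beta) = \alpha b$, I divide by $b(\alpha - \beta)$; the only obstacle is to verify that $\alpha - \beta \neq 0$. This is the one place care is needed: if $\alpha = \beta$, then $a(\alpha-\beta)=0$ forces $\alpha b = 0$, and since $b \neq 0$ this gives $\alpha = 0$, hence $\beta = 0$, contradicting the hypothesis $b\beta \neq 0$. So $\alpha - \beta \neq 0$ is automatic, and the division is legitimate, yielding $\frac{a}{b} = \frac{\alpha}{\alpha - \beta}$.

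No step requires more than elementary algebra; the only non-routine observation is the exclusion of $\alpha = \beta$, which I would single out as a brief paragraph to keep the biconditional clean. The whole proof should fit in a few lines of display math.
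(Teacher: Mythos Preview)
Your proof is correct and follows essentially the same route as the paper: clear the denominator $b\beta$, rearrange to $a(\alpha-\beta)=\alpha b$, rule out $\alpha=\beta$ by the same contradiction argument, and conclude. The only cosmetic difference is that the paper verifies the converse implication by an explicit computation showing both sides equal $\frac{\alpha^{2}}{(\alpha-\beta)\beta}$, whereas you (more efficiently) note that every step is reversible.
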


\begin{proof}
	Assume that addition gives the same result as multiplication,
	namely 
	\begin{equation*}
		\frac{a\beta +b \alpha}{b \beta}= \frac{a \alpha}{b \beta}.
	\end{equation*}
	Multiplying both sides by ${b \beta}$ gives us
	\begin{equation*}
		a\beta +b \alpha= a \alpha,
	\end{equation*}
	and thus
	\begin{equation*}
		a(\beta - \alpha) +b \alpha=0.
	\end{equation*}
	If $\alpha = \beta$, we get $b \alpha=0$ implying $\alpha=0$ since $b \neq 0$ by assumption. 
	But this would imply that also $\beta=0$, which is obviously excluded.
	Hence, we may assume without loss of generality that $\alpha \neq \beta$.
	We obtain
	\begin{equation*}
		\frac{a}{b} =\frac{\alpha }{ \alpha - \beta}.
	\end{equation*}
To prove the converse, let us assume this relation.
We have

\begin{eqnarray*}
	\frac{a}{b}+\frac{\alpha}{\beta}
	&=&
	\frac{\alpha }{ \alpha - \beta}+\frac{\alpha }{\beta}\\
	&=&
   \frac{\alpha \beta }{ (\alpha - \beta) \beta}+\frac{\alpha (\alpha - \beta) }{\beta (\alpha-\beta)}\\
   &=&
   \frac{\alpha^2 }{(\alpha-\beta) \beta }\\
	&=&
	\frac{a}{b} \cdot \frac{\alpha}{\beta}	\\	
	&=&
	\frac{\alpha }{ \alpha - \beta} \cdot \frac{\alpha}{\beta},
\end{eqnarray*}
which completes the proof.
\end{proof}

Note also that we may also answer in general when addition agrees with multiplication (not just for fractions):
assuming $x+y=xy$ we get $y (x-1)=x$. For $x=1$ this holds true. 
Hence, assuming $x \neq 1$, we have $y=\frac{x}{x-1}$ 
(and, obviously, $\frac{x}{x-1} = \sum_{k=0}^\infty {\frac{1}{x^k}}$ outside the unit circle).

When addition agrees with sum of second kind is considered in the next
\begin{proposition}
	Let $x=\frac{a}{b}$, $y=\frac{\alpha}{\beta}$ be fractions. Then we have $x+y= x \boxplus y$
		if, and only if,
		\begin{align*}
		b^2 \left(a \beta  - \alpha +\frac{1}{2} \beta ^2 \right) + \beta^2 \left(\alpha b  - a +\frac{1}{2} b ^2 \right) 
		+ ab \beta (\alpha -1) 
		- \alpha \beta b=0.	
	\end{align*}
	\end{proposition}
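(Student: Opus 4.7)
The plan is to reduce the equality $x + y = x \boxplus y$ to an equivalent polynomial identity in $a, b, \alpha, \beta$ by clearing denominators, and then to re-group the resulting expression so it matches the claimed closed form.

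First, I would write out the two sides using the definitions \eqref{addition} and the definition of $\boxplus$ given in the introduction, obtaining
\[
\frac{a\beta + \alpha b}{b\beta} \;=\; \frac{(a+\beta)(\alpha + b)}{b+\beta}.
\]
Since $b, \beta \neq 0$ (and we may assume $b+\beta \neq 0$, otherwise $\boxplus$ is undefined), this equation is equivalent to
\[
(a\beta + \alpha b)(b+\beta) \;=\; b\beta (a+\beta)(\alpha+b).
\]
Expanding both sides gives
\[
ab\beta + a\beta^{2} + \alpha b^{2} + \alpha b\beta
\;=\;
ab\alpha\beta + ab^{2}\beta + \alpha b\beta^{2} + b^{2}\beta^{2},
\]
and moving everything to one side yields a homogeneous polynomial equation in $a, b, \alpha, \beta$.

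The second step is purely a cosmetic re-grouping: I would collect the terms of the resulting expression by their $b^{2}$, $\beta^{2}$, and mixed factors so as to recover the asserted shape. Concretely, gathering $b^{2}$-multiples one gets $-\alpha b^{2} - b^{2}\beta^{2}/2$ paired against $ab^{2}\beta$ from the other side (after splitting the symmetric quartic $b^{2}\beta^{2}$ evenly), yielding the block $-\,b^{2}\!\left(a\beta - \alpha + \tfrac{1}{2}\beta^{2}\right)$; symmetrically for $\beta^{2}$. The remaining cross terms $ab\beta + \alpha b\beta - ab\alpha\beta$ combine into $-ab\beta(\alpha - 1) + \alpha \beta b$. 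Multiplying the whole equation by $-1$ then produces exactly
\[
b^{2}\!\left(a\beta - \alpha + \tfrac{1}{2}\beta^{2}\right) + \beta^{2}\!\left(\alpha b - a + \tfrac{1}{2} b^{2}\right) + ab\beta(\alpha-1) - \alpha\beta b = 0.
\]
Since each step is a reversible algebraic manipulation, this establishes both directions of the equivalence at once.

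The proof is essentially a bookkeeping calculation; the main (and only) obstacle is to carry out the splitting of the symmetric $b^{2}\beta^{2}$ term into two halves without sign errors, so that the two quadratic blocks in the statement come out symmetric under $(a,b)\leftrightarrow(\alpha,\beta)$ up to the non-symmetric remainder $ab\beta(\alpha-1) - \alpha\beta b$. I would verify this by substituting a small numerical example (e.g.\ $x=1/2, y=1/3$) at the end as a sanity check that the sign conventions are correct.
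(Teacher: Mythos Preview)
Your argument is correct and follows exactly the same route as the paper: clear denominators, expand both sides, and regroup to obtain the stated polynomial identity. Your write-up is in fact more explicit about the regrouping (splitting the $b^{2}\beta^{2}$ term and collecting the cross terms) than the paper, which simply expands and asserts ``which yields the claim.''
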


\begin{proof}
Let us assume that, for two fractions $x,y$, their sum equals their dual sum of second kind:
$x+y= x \boxplus y$. By the definition of $\boxplus$ this means 
		\begin{align*}
		\frac{a \beta + \alpha b }{b \beta}=\frac{(a+\beta)(b+\alpha)}{ b+\beta}.
	\end{align*}
Clearing the denominators we obtain
\begin{align*}
	(a \beta + \alpha b) (b+\beta)=(a+\beta)(b+\alpha) b \beta,
\end{align*}
we get
\begin{align*}
(ab +a\alpha + b \beta + \alpha  \beta) b\beta = ab \beta  + a \beta ^2 + b^2 \alpha  + b \alpha \beta
\end{align*}
and thus
\begin{align*}
a b^2 \beta + a \alpha b \beta + (\beta b)^2 + \alpha \beta^2 b=ab \beta + a \beta^2 + b^2 \alpha + b \alpha \beta,
\end{align*}
which yields the claim. The conversion is easy to verify.
\end{proof}

\subsection{Multiplication}
In this setion we answer when fraction multiplication agrees with one of the four other operations.

When the product equals dual product is answered in the following
\begin{proposition}
	Let $x=\frac{a}{b}$, $y=\frac{\alpha}{\beta}$ be fractions. Then it holds $xy= x \odot y$
	if, and only if,
	\begin{align*}
	a b (\alpha  - \beta) =\alpha \beta (b-a).
	\end{align*}
\end{proposition}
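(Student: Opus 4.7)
The plan is to reduce the asserted equivalence to a single cross-multiplication followed by a rearrangement of terms. By the definition \eqref{multiplication} of fraction multiplication and the definition of $\odot$, the equation $xy = x\odot y$ reads
\[
\frac{a\alpha}{b\beta}=\frac{a+\alpha}{b+\beta},
\]
which is meaningful under the implicit assumption $b+\beta\neq 0$ needed for $x\odot y$ to be defined, in addition to the standing hypothesis $b\beta\neq 0$.

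First I would clear denominators by multiplying both sides by the nonzero quantity $b\beta(b+\beta)$, obtaining $a\alpha(b+\beta)=b\beta(a+\alpha)$. Expanding both sides yields $a\alpha b+a\alpha\beta=ab\beta+\alpha b\beta$, and transposing the $ab$-terms to the left and the $\alpha\beta$-terms to the right gives
\[
ab(\alpha-\beta)=\alpha\beta(b-a),
\]
which is precisely the claimed condition.

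Since every step in this chain—multiplication by a nonzero factor, polynomial expansion, transposition—is an equivalence, the converse direction is obtained simply by reading the argument backwards, and no separate verification is needed. I expect no substantive obstacle here: the whole proof is a two-line manipulation. The only point worth flagging briefly is the implicit domain requirement $b+\beta\neq 0$, without which $x\odot y$ is itself undefined and the statement on its left-hand side is vacuous.
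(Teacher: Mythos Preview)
Your proof is correct and follows essentially the same route as the paper: both start from $\frac{a\alpha}{b\beta}=\frac{a+\alpha}{b+\beta}$, clear denominators, expand, and rearrange to the stated identity. The paper then adds a further (strictly unnecessary) discussion parametrising the solutions via a constant $c=\frac{ab}{b-a}=\frac{\alpha\beta}{\alpha-\beta}$, whereas you simply observe that every step is reversible; your remark on the implicit hypothesis $b+\beta\neq 0$ is a valid refinement the paper leaves tacit.
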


\begin{proof}
	Let us take two fractions $x=\frac{a}{b}$, $y=\frac{\alpha}{\beta}$ whose
	product equals their dual product: $xy= x \odot y$.
	By the definition of  $\odot$ this means
	\begin{align*}
		\frac{a \alpha }{b \beta}=\frac{a+\alpha}{ b+\beta}.
	\end{align*}
Hence,
\begin{align*}
	a \alpha b + a \alpha \beta=a b\beta+\alpha b\beta,
\end{align*}
and thus
\begin{align*}
	a b (\alpha  - \beta) =\alpha \beta (b-a).
\end{align*}
If $\alpha = \beta$, we get $0=\alpha^2 (b-a)$, whence also $a=b$.
In this situation both fractions are equal to one and everything is fine.
The case where exactly one fraction equals one, but the other not, cannot occur.
Therefore, we assume without loss of generality that  $a \neq b$ and $\alpha \neq \beta$.
Hence,
\begin{align*}
	\frac{a b}{b-a}  =\frac{\alpha \beta}{\alpha  - \beta}
\end{align*}
is constant, say $=:c$, to yield
\begin{align*}
a b =c ({b-a}).
\end{align*}
Consequently,
\begin{align*}
	a (b+c) =c b,
\end{align*}
and, since $c \neq -b$,
\begin{align*}
	a  =\frac{c}{b+c} b.
\end{align*}
The converse implication is easy to verify.
	\end{proof}

When product and dual sum of second kind agree is answered in the following
\begin{proposition}
	Let $x=\frac{a}{b}$, $y=\frac{\alpha}{\beta}$ be fractions. Then it holds $xy= x \boxplus y$
if, and only if,
\begin{align*}
b^2 (a + \beta) + \beta^2 (b+\alpha) + \alpha \beta b + ab \beta =0.
\end{align*}
\end{proposition}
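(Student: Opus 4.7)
The approach mirrors the preceding two propositions in the section. The plan is to start from the defining equality $xy = x \boxplus y$, invoke the definitions of ordinary multiplication and of $\boxplus$ to rewrite it as
\begin{equation*}
\frac{a\alpha}{b\beta} = \frac{(a+\beta)(\alpha+b)}{b+\beta},
\end{equation*}
then clear denominators by multiplying through by $b\beta(b+\beta)$. Since $b,\beta\neq 0$ by hypothesis, this step is reversible whenever $b+\beta\neq 0$; I would handle the degenerate case $b+\beta=0$ separately as an ancillary check (there the right-hand side is undefined, so the equality is vacuous and need not be addressed).

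Next, I would expand both sides as polynomials in $a,b,\alpha,\beta$. The cross-multiplied identity takes the form
\begin{equation*}
a\alpha(b+\beta) = b\beta(a+\beta)(\alpha+b),
\end{equation*}
and fully expanding the right-hand side produces the four terms $ab\alpha\beta$, $ab^2\beta$, $\alpha b\beta^2$, $b^2\beta^2$. The plan is then to transport the left-hand side over, collect like terms, and reorganise the resulting polynomial into the grouping $b^2(a+\beta) + \beta^2(b+\alpha) + \alpha\beta b + ab\beta$ exhibited in the statement. This is exactly the pattern used in the preceding proposition on $x+y = x\boxplus y$, so I would present the manipulation in the same display-style chain of equalities.

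For the converse, I would rely on the fact that every step above is an equivalence under the standing hypothesis $b\beta\neq 0$: once the polynomial identity holds, one may reverse the grouping, factor back into the product $b\beta(a+\beta)(\alpha+b)$ minus $a\alpha(b+\beta)$ if desired, and divide through by $b\beta(b+\beta)$ to recover $xy = x\boxplus y$. As in the earlier proofs, it should suffice to remark that ``the converse is easy to verify.''

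The main obstacle is purely bookkeeping: keeping track of the mixed-degree monomials produced by the double expansion of $b\beta(a+\beta)(\alpha+b)$ and recognising the correct factorisation that matches the proposed normal form $b^2(a+\beta) + \beta^2(b+\alpha) + \alpha\beta b + ab\beta$. There is no conceptual obstruction; the only care required is to isolate which monomials are meant to be absorbed into the two quadratic factors $b^2(a+\beta)$ and $\beta^2(b+\alpha)$ and which are meant to stay as the cubic correction terms $\alpha\beta b + ab\beta$.
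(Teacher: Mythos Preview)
Your strategy---cross-multiply, expand, regroup---is precisely the pattern of the neighbouring propositions, and the paper itself offers no proof for this one, so there is nothing further to compare against. The difficulty is that the final regrouping you promise cannot be carried out: the polynomial in the proposition is \emph{not} the one produced by clearing denominators.

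After cross-multiplying you correctly reach
\[
a\alpha(b+\beta)=b\beta(a+\beta)(\alpha+b),
\]
which expands to
\[
ab\alpha\beta + ab^{2}\beta + \alpha b\beta^{2} + b^{2}\beta^{2} - a\alpha b - a\alpha\beta = 0.
\]
This expression contains the monomials $ab\alpha\beta$, $a\alpha b$, $a\alpha\beta$; in particular the product $a\alpha$ occurs. By contrast the target
\[
b^{2}(a+\beta)+\beta^{2}(b+\alpha)+\alpha\beta b+ab\beta
= ab^{2}+b^{2}\beta+b\beta^{2}+\alpha\beta^{2}+\alpha b\beta+ab\beta
\]
has no term involving $a\alpha$ at all, so no amount of bookkeeping will transform the first polynomial into the second. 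A concrete witness: take $a=3$, $b=1$, $\alpha=2$, $\beta=1$. Then $xy=6$ and $x\boxplus y=\frac{(3+1)(2+1)}{2}=6$, so $xy=x\boxplus y$ holds; yet the displayed quantity equals $4+3+2+3=12\neq 0$. Thus the stated condition is not necessary, and your ``reorganise the resulting polynomial into the grouping exhibited in the statement'' step would fail the moment you actually attempt it. The method is sound; the target identity in the proposition is erroneous.
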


Similary, the question when product and dual sum of first kind are equal, is not difficult to answer.
\begin{proposition}
	Let $x=\frac{a}{b}$, $y=\frac{\alpha}{\beta}$ be fractions. Then, it holds $xy= x \oplus y$, i.e.
	\begin{align*}
		\frac{a \alpha }{b \beta}=\frac{a+\beta b+\alpha}{ b+\beta},
	\end{align*}
	if, and only if,
	\begin{align*}
		ab (\alpha -\beta ) + \alpha \beta (a-b) +  =(\beta b)^2.
	\end{align*}
\end{proposition}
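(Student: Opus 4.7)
The plan is to imitate the strategy already used twice in this section on multiplication: clear denominators, expand, and collect terms until the claimed polynomial identity appears. Concretely, I will start from the assumed equality
\[
\frac{a\alpha}{b\beta} \;=\; \frac{a + b\beta + \alpha}{b+\beta},
\]
multiply both sides through by $b\beta(b+\beta)$ (legal since $b\beta\neq 0$ by hypothesis, and $b+\beta\neq 0$ is tacitly required for $x\oplus y$ to be defined), and obtain
\[
a\alpha(b+\beta) \;=\; (a + b\beta + \alpha)\,b\beta.
\]

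Next I will expand both sides. On the left I get $a\alpha b + a\alpha\beta$; on the right I get $ab\beta + (b\beta)^2 + \alpha b\beta$. The summand $(b\beta)^2$ is exactly the right-hand side of the target identity, so I will move it over and regroup the remaining four terms in the natural way: $a\alpha b - ab\beta = ab(\alpha-\beta)$ and $a\alpha\beta - \alpha b\beta = \alpha\beta(a-b)$. This yields precisely
\[
ab(\alpha-\beta) + \alpha\beta(a-b) \;=\; (b\beta)^2,
\]
which is the asserted condition (the stray ``$+$'' printed before the ``$=$'' in the statement is a typographical slip).

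For the converse I simply observe that every manipulation above is reversible as long as $b\beta\neq 0$ and $b+\beta\neq 0$; thus, starting from the polynomial identity, dividing by $b\beta(b+\beta)$ recovers $xy = x\oplus y$.

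There is no real obstacle here, since the argument is a purely routine symbolic computation of the same flavor as the two preceding multiplication propositions. The only thing worth flagging is the nondegeneracy $b+\beta\neq 0$, which is an implicit hypothesis whenever the operation $\oplus$ appears; without it neither direction of the equivalence is even well-posed.
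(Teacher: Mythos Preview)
Your argument is correct and is exactly the routine cross-multiply-and-regroup computation the paper has in mind; indeed the paper does not spell out a proof for this proposition at all, merely remarking that the question ``is not difficult to answer'' after the analogous $xy = x\boxplus y$ case. Your observation about the stray ``$+$'' before ``$=$'' is also apt, and your flagging of the tacit hypothesis $b+\beta\neq 0$ is a reasonable bit of hygiene that the paper omits throughout.
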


\section{The case $\lambda=0$ }
Here we describe which fractions 'add' to zero. This corresponds to the case of kernels of linear operators.
Which fractions add or multiply to zero, is described in the following
\begin{theorem}
	Let $x=\frac{a}{b}$, $y=\frac{\alpha}{\beta}$ be fractions. It holds:
	\begin{enumerate}
		\item[(i)]
		 $x+y= 0$ iff $y=-x$;
		 \item[(ii)]
		 $x y=0$ iff $x=0$ or $y=0$;
		 \item[(iii)]
		 $x \oplus y= 0$ iff $a+\beta \alpha +b=0$, i.e. $y=-\frac{b}{\beta^2} x -\frac{b}{\beta^2}$;
		 \item[(iv)]
		 $x \boxplus y= 0$ iff $(a+\beta)(\alpha + b)=0$, i.e. $a=-\beta$ or $\alpha=-b$.
	\end{enumerate}
\end{theorem}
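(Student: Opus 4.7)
The plan is to handle each of the four parts by directly expanding the defining formula of the corresponding operation, reducing the equation ``result $=0$'' to a vanishing-numerator condition, and then rearranging to the stated alternative form. Throughout, I would implicitly use the standing hypothesis that $b,\beta\neq 0$, and in parts (iii)--(iv) additionally assume $b+\beta\neq 0$ so that the operations $\oplus$ and $\boxplus$ are defined.

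For (i), I would apply definition \eqref{addition} to write $x+y=\frac{a\beta+\alpha b}{b\beta}$; since $b\beta\neq 0$, this vanishes iff $a\beta+\alpha b=0$, equivalently $\frac{\alpha}{\beta}=-\frac{a}{b}$, i.e.\ $y=-x$. For (ii), definition \eqref{multiplication} gives $xy=\frac{a\alpha}{b\beta}$, which vanishes iff $a\alpha=0$, hence iff $a=0$ or $\alpha=0$, i.e.\ $x=0$ or $y=0$.

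For (iii), expanding the definition of $\oplus$ yields $x\oplus y=\frac{a+\beta\alpha+b}{b+\beta}$, which equals zero iff $a+\beta\alpha+b=0$. To obtain the second form, I would solve for $\alpha$, getting $\alpha=-(a+b)/\beta$, so that $y=\alpha/\beta=-(a+b)/\beta^{2}=-\frac{b}{\beta^{2}}\cdot\frac{a}{b}-\frac{b}{\beta^{2}}=-\frac{b}{\beta^{2}}\,x-\frac{b}{\beta^{2}}$. For (iv), expanding the definition of $\boxplus$ gives $x\boxplus y=\frac{(a+\beta)(\alpha+b)}{b+\beta}$, which vanishes iff $(a+\beta)(\alpha+b)=0$; since this is a product of reals, this splits as $a=-\beta$ or $\alpha=-b$.

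There is no real obstacle here: every step is an immediate application of the definition together with the fact that a fraction equals zero precisely when its numerator does. The only minor point to watch is the bookkeeping around the nondegeneracy assumption $b+\beta\neq 0$ in parts (iii) and (iv), which is needed for $\oplus$ and $\boxplus$ to make sense at all; under that assumption the proof is purely computational.
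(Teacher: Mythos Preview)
Your proposal is correct and follows essentially the same route as the paper: for each of the four operations you expand the defining formula, reduce the vanishing condition to a vanishing numerator, and in part (iii) rearrange $a+\beta\alpha+b=0$ into the form $y=-\frac{b}{\beta^{2}}x-\frac{b}{\beta^{2}}$ exactly as the paper does. The only difference is cosmetic: you spell out (i), (ii), (iv) in slightly more detail than the paper, which dismisses them as obvious or immediate from the definition, and you make explicit the standing assumption $b+\beta\neq 0$ needed for $\oplus$ and $\boxplus$.
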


\begin{proof}
The statements $(i)$ and $(ii)$ on sums and products, respectively, are obvious.
To prove $(iii)$, let $ x \oplus y= 0$, i.e. $\frac{a+\beta \alpha + b}{b+ \beta}=0$. 
Clearing the denominator, we obtain $a+\beta \alpha +b=0$, whence $a +b=-\alpha \beta $,
consequently $x+1=-\frac{\beta^2}{b} y$ and thus $y=-\frac{b}{\beta^2} x -\frac{b}{\beta^2}$.\\
Statement  $(iv)$ follows from the definition of $\boxplus$.
The converse implications are easily verified.
\end{proof}
\section{Linear dependence - $\lambda \notin \{0,1\} $ }

Analogously to the previous sections one can prove the following

\begin{theorem}\label{linDependence}
	Let $x=\frac{a}{b}$, $y=\frac{\alpha}{\beta}$ be fractions and $\lambda \in \C \setminus \{0,1\}$ be fixed. It holds:
\begin{enumerate}
	\item
	$x+y= \lambda x \odot y $ iff $(a b \beta  + \alpha \beta b)(1-\lambda)+ a \beta^2 + \alpha b^2=0$;
	\item
	$x+y=\lambda x y$ iff $y=\frac{x}{\lambda x -1}$ for $x \neq \frac{1}{\lambda}$
	\item
	$x+y=\lambda x \boxplus y$ iff $ab \beta (1-\lambda) + a \beta ^2 + \alpha \beta b (1-\beta) + b^2 (\alpha - \lambda \beta ) $.
	\item
	$xy= \lambda x \odot y$ iff $ab (\alpha - \lambda \beta) + \alpha \beta  (a-\lambda b)=0$.
	\item
	$xy= \lambda x \boxplus y$ iff $a \alpha \left( b (1 -\lambda \beta) +  \beta \left(1 - \lambda \frac{b \beta}{a} \right) \right) - \lambda (a b^2 \beta + (b \beta)^2)=0$;
	\item
	$xy= \lambda x \oplus y$ iff $ab (\alpha - \lambda \beta) + \alpha \beta (a - \lambda b \beta)= \lambda b^2 \beta$.
\end{enumerate}
\end{theorem}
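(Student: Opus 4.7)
The plan is to treat each of the six equivalences in Theorem \ref{linDependence} as an independent but entirely parallel calculation, proceeding exactly in the style of the propositions of the previous two sections. Throughout, I use that $b$ and $\beta$ are nonzero (they are denominators of fractions) and, where necessary, that $b+\beta \neq 0$ so that the $\odot$, $\oplus$, and $\boxplus$ operations are defined on the fractions at hand.

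For each item I would (i) substitute the relevant definitions on both sides to obtain a single rational equation, (ii) clear all denominators by multiplying through with an appropriate nonzero factor (one of $b\beta$, $b\beta(b+\beta)$, or $(b+\beta)^2$ depending on which operations appear), and (iii) expand and collect terms to land on the polynomial identity displayed in the theorem. The converse in every part is a formal reversal: the steps (i)--(iii) consist only of multiplication by nonzero quantities and algebraic expansion, hence each is a biconditional. Concretely, item (1) follows by expanding $(a\beta+\alpha b)(b+\beta)=\lambda\, b\beta(a+\alpha)$ and cancelling the $ab\beta+\alpha b\beta$ appearing on both sides with a factor $(1-\lambda)$; item (4) likewise reduces $a\alpha(b+\beta)=\lambda\, b\beta(a+\alpha)$ to the stated relation after trivial regrouping. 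Items (3), (5), (6) follow the same pattern starting, respectively, from
\begin{equation*}
(a\beta+\alpha b)(b+\beta) = \lambda\, b\beta(a+\beta)(\alpha+b),\quad
a\alpha(b+\beta)=\lambda\, b\beta(a+\beta)(\alpha+b),\quad
a\alpha(b+\beta)=\lambda\, b\beta(a+\beta\alpha+b),
\end{equation*}
expanding and collecting terms of like degree in $a,b,\alpha,\beta$.

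Item (2) is the only part requiring a small detour: from $x+y=\lambda xy$ one rearranges to $y(\lambda x-1)=x$, and then the stated formula $y=x/(\lambda x-1)$ is obtained after noting that the excluded value $x=1/\lambda$ forces $x=0$, contradicting $\lambda x=1$ and $\lambda\neq 0$; so the case distinction is genuinely needed, and the converse is a one-line substitution.

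The main obstacle is not conceptual but bookkeeping: in items (3), (5), and (6) the product $(a+\beta)(\alpha+b)$ or $(a+\beta\alpha+b)$ generates four terms each of which must be paired against $a\beta+\alpha b$ or $a\alpha$ scaled by $b+\beta$, so one must be careful to track which summands cancel and which survive as coefficients of $\lambda$. Beyond that, the entire theorem is a systematic application of \textquotedblleft clear denominators and collect\textquotedblright, mirroring the corresponding $\lambda=1$ propositions above; indeed one may verify at the end that setting $\lambda=1$ in each of (1)--(6) recovers the earlier results, which serves as a useful sanity check on the algebra.
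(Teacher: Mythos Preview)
Your proposal is correct and follows exactly the approach the paper itself indicates: the paper offers no explicit proof of this theorem, merely remarking that it is proved ``analogously to the previous sections,'' and your plan of substituting the definitions, clearing the nonzero denominators $b\beta$ and $b+\beta$, and collecting terms is precisely that analogy made explicit. Your treatment of item~(2), including the observation that $x=1/\lambda$ would force $x=0$ and hence a contradiction, is also the natural extension of the paper's Proposition on $x+y=xy$.
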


\section{Homogeneity}
In this section we investigate how the five operations between fraction behave under scaling of their inputs.

\begin{align}
s \frac{a}{b} +t \frac{\alpha}{\beta} = \frac{s a \beta + t \alpha b}{b \beta}
= s  \frac{a \beta + \frac{t}{s} \alpha b}{b \beta}.
\end{align} 
For $s=t$ we have $t \frac{a}{b} +t \frac{\alpha}{\beta}=t \frac{a \beta + \alpha b}{b \beta}$.
Scaling of the inputs results in a sum scaled by the same amount, which is, of course, a consequence of linearity.
We describe this phenomennon as: the operation $+$ is $1$-homogeneous.

Secondly, multiplication: for $s,t \neq 0$ we have
\begin{align}
s \frac{a}{b} \cdot t \frac{\alpha}{\beta} = st \frac{a  \alpha }{b \beta}.
\end{align} 
Thus, $t \frac{a}{b} \cdot  t\frac{\alpha}{\beta} = t^2 \frac{a  \alpha }{b \beta}$. 
So multiplication of fractions is positively homogeneous of degree $2$.

Thirdly, for dual multiplication it holds
\begin{align*}
s\frac{a}{b}  \odot t\frac{\alpha}{\beta} = \frac{sa  + t\alpha }{b + \beta}
=s \frac{a  + \frac{t}{s} \alpha }{b + \beta}.
\end{align*} 
For $s=t$ hence $t\frac{a}{b}  \odot t\frac{\alpha}{\beta} = t \frac{a  + \alpha }{b + \beta}$,
which means that dual multiplication $\odot$  is a $1$-homogeneous operation.

Fourthly, dual addition of first kind:
\begin{align*}
s \frac{a}{b}  \oplus t \frac{\alpha}{\beta} =  \frac{s a + \beta \cdot t \alpha + b}{b + \beta}
=  s \frac{a + \beta \cdot \frac{t}{s} \alpha + \frac{1}{s}b}{b + \beta}.
\end{align*} 
For $s=t$ we get $t \frac{a}{b}  \oplus t \frac{\alpha}{\beta} = t \frac{a + \beta \cdot  \alpha + \frac{1}{t}b}{b + \beta}$.
Thus, in general dual addition of first kind $\oplus$  is not a homogeneous operation.

Sixthly, dual addition of second kind:
\begin{align*}
s \frac{a}{b}  \boxplus  t \frac{\alpha}{\beta} = \frac{ \left(s a + \beta \right) \cdot \left(t \alpha + b \right)}{b + \beta}
=s  \frac{ \left(a + \frac{1}{s} \beta \right) \cdot \left(\frac{t}{s} \alpha + \frac{1}{s}b \right)}{b + \beta}.
\end{align*} 
For $s=t$ we have $t \frac{a}{b}  \boxplus  t \frac{\alpha}{\beta} =
t  \frac{ \left(a + \frac{1}{t} \beta \right) \cdot \left( \alpha + \frac{1}{t}b \right)}{b + \beta}$
So in general neither dual addition of second kind $\oplus$ is  a homogeneous operation.

\section{Well-Definedness of the Fraction Operations}
When defining equality of two fractions $x=\frac{a}{b}$ and $y=\frac{\alpha}{\beta}$ it is not zielfuehrend 
to do this in a 'point wise' manner
\footnote{By this we mean:  $\frac{a}{b}=\frac{\alpha}{\beta}$ if $a=\alpha$ and $b=\beta$.}  
because numerator and denominator are uniquely determined up to scaling with a non-zero number
(i.e. for any fraction $x=\frac{a}{b}$ we have: $\frac{a}{b}=\frac{at}{bt}$ for all $t \in \R, t \neq 0$.)
Rather two fractions $x=\frac{a}{b}$ and $y=\frac{\alpha}{\beta}$ are called equal, if
$a \beta-b \alpha=0$.
Long story short, when introducing some operartions between fractions based on a concrete representation of
a fraction we need to verify that the result is unique and hence does not depend on the representation of the fraction we chose,
so we have indeed a well-defined operation.

To illustrate this conept, we check well-definedness of addition and mulitiplication, respectively.

\subsection{Well-Definedness of Fraction Addition}
\begin{remark}
For two fractions $x=\frac{a}{b}$ and $y=\frac{\alpha}{\beta}$ and arbitrary $s,t \in \R$, $st \neq 0$, we have:
\begin{align*}
	\frac{s a}{s b}	+ \frac{t \alpha}{t \beta} &=&\frac{s a t \beta + t \alpha s b}{s b t \beta}\\
	&=&\frac{st (a  \beta +  \alpha  b) }{st b  \beta}\\
	&=&\frac{a  \beta +  \alpha  b }{b  \beta}
	=\frac{a}{ b}	+ \frac{ \alpha}{ \beta}.
\end{align*}	
Thus, the sum of two fractions does not depend on the specific representation of the two fractions 
and so fraction addion is well-defined. 
\end{remark}

\subsection{Well-Definedness of Fraction Multiplication}
\begin{remark}
	For two fractions $x=\frac{a}{b}$ and $y=\frac{\alpha}{\beta}$ and arbitrary $s,t \in \R$, $st \neq 0$, we have:
	\begin{eqnarray*}
		\frac{s a}{s b}	\cdot \frac{t \alpha}{t \beta} &=&\frac{s a t \alpha}{s b t \beta}	\\
		&=&\frac{st (a  \alpha)}{st (b  \beta)}	=\frac{a}{ b}	\cdot \frac{ \alpha}{ \beta}.
	\end{eqnarray*}	
	Thus, the product of two fractions does not depend on the specific representation of the fractions 
	and so fraction multiplication is well-defined. 
\end{remark}

\subsection{Well-Definedness of Dual Fraction Addition of First Kind}
There is an intriguing thing about dual fraction addition of first kind which stems from fact 
that the sign of a fraction can be interpreted as the sign of the denominator or as the sign of the numerator.
For instance, we have
\begin{align*}
\frac{1}{2} \oplus \frac{-1}{2} = \frac{1+2+(-1)*(2)}{2+2} =\frac{1}{4},
\end{align*}	
but on the other hand
\begin{align*}
\frac{1}{2} \oplus \frac{1}{-2} = \frac{1+2+(1)*(-2)}{2-2} =\frac{1}{0}.
\end{align*}
With that in mind well-definedness of the $\oplus$-operation is not just a mathematical formality,
but non-trivial and a little involved.
In constrast to the usual addition and multiplication of fractions, it does not lead to an algebraic identity,
but rather to a condition on the numerators and denominators of the fractions.


\begin{remark}
The dual sum of first kind of the fractions  $x=\frac{a}{b}$ and $y=\frac{\alpha}{\beta}$ is well-defined if, and only if,
\begin{align}
	s \left(\alpha b (1+\beta) - a (b+\beta) +b^2 \right) 
	+t \left(b (\beta-\alpha) + \alpha \beta^2 \right)
	= t^2   \alpha \beta  \left( 1+ \beta \right)
	\label{eq:wd_ds1}
\end{align}	
for all $s,t \in \R$, $st \neq 0$.
\end{remark}
\begin{proof}
By the definition of dual sum of first kind, for two fractions $x=\frac{a}{b}$ and $y=\frac{\alpha}{\beta}$ and arbitrary 
$s,t \in \R$, $st \neq 0$, the well-definedness condition 
	$\frac{sa}{sb} \oplus \frac{t\alpha}{t\beta} = \frac{a}{b} \oplus \frac{\alpha}{\beta}$ amounts to
\begin{align*}
	\frac{a + \beta \alpha+ b}{b+\beta} = \frac{sa + t^2 \beta \alpha+ sb}{s b+t \beta}
\end{align*}
and hence	\eqref{eq:wd_ds1} as claimed.

\end{proof} 
Since this condition holds for all $s,t \in \R$, $st \neq 0$, the terms in front of $s,t$ and $t^2$ must vanish.
Hence, we get the following system of equations:
\begin{eqnarray*}
	b^2 + \alpha b (1+\beta) = a (b+\beta)\\
	b (\alpha - \beta) = \alpha \beta^2\\
	\alpha \beta (1+ \beta)=0.
\end{eqnarray*}
Obviously, 
we have 
$b \beta  \neq 0$.
To solve the third equation, assume $\alpha =0$ (which means that the second fraction is zero).
From the second equation we obtain $\alpha = - \frac{b}{\beta}$, a contradiction.
Thus, we may assume without lost of generality $\alpha \neq 0$.
To satisfy the third equation, we conclude $\beta=-1$.
The second equation then reads 

\begin{align*}
	b (\alpha +1)=\alpha.
\end{align*}
We may devide here by $\alpha+1$,
since $\alpha =-1$ together with this equation would yield a contradiction to our assumption $\alpha \neq 0$,
resulting in
\begin{align*}
	b =\frac{\alpha}{\alpha +1}.
\end{align*}
From the first equation we now obtain
\begin{align*}
	a= -\frac{\alpha^2}{\alpha+1},
\end{align*}
and thus
\begin{align*}
	\frac{a}{b} \oplus \frac{\alpha}{\beta} &=& \frac{\left(-\frac{\alpha^2}{\alpha+1}\right)}{\left(\frac{\alpha}{\alpha+1}\right)} \oplus \frac{\alpha}{-1}\\
	&=& \frac{-\alpha}{1} \oplus \frac{-\alpha}{1}\\
	&=& \frac{-2\alpha+1}{2}
\end{align*}
Note here that again we attributed the sign of the fraction suitably to avoid a zero denominator.
On the other hand, $\frac{\alpha}{-1} \oplus \frac{\alpha}{-1}=\frac{1}{2}$.
Together with the latter relation this would give us $\alpha=0$,
which was excluded before.
When accepting here zero denominators as a natural thing,
 from $\frac{-\alpha}{1} \oplus \frac{\alpha}{-1} = \frac{-2\alpha+1}{0}$we would obtain $\alpha=\frac{1}{2}$.
We observe that on the one hand zero denominators appear naturally 
in the context of fraction addition of first kind,
and, on the other hand, some sign convention is needed.

\subsection{Well-Definedness of Dual Fraction Addition of Second Kind}

Similarly as for fraction addition of first kind one can prove the following

\begin{remark}
	The dual sum of second kind of the fractions  $x=\frac{a}{b}$ and $y=\frac{\alpha}{\beta}$ is well-defined if, and only if,
	\begin{align}
		s (a \alpha b  +ab \beta ++ a b^2+ \beta b^2)+t (a \alpha \beta + ab \beta+\alpha b^2 +b \beta^2)
		=st \left( (a \alpha)(b+\beta) +b^2 \beta + b \beta^2 \right)+ \\
		+ s^2 ab(b +\beta)+t^2 \alpha \beta (b +\beta)
	\end{align}	
	for all $s,t \in \R$, $st \neq 0$.
\end{remark}

\subsection{Well-Definedness of Dual Fraction Multiplication}
This is properly the most common operation at school 
since almost every pupil whished to add fractions in this way.

In a nutshell, the result is the following: pupil addition ($=$ dual fraction multiplication) is well-defined
only for different fractions.
The dual product of one fraction may be scaled only by the same factor.

\begin{remark}
	For two fractions $x=\frac{a}{b}$ and $y=\frac{\alpha}{\beta}$ 
	the dual product of $x$ and $y$ is well-defined if, and only if,
	\begin{align}
		(s-t) (a \beta - \alpha b)=0.
		\label{eq:wd_dp}
		\end{align}	
	for all $s,t \in \R$, $st \neq 0$.
	
\end{remark} 

\begin{proof}
By the definition of dual product, the well-definedness condition amounts to
	\begin{align*}
	\frac{s a}{s b}	\odot \frac{t \alpha}{t \beta} &=& \frac{sa + t \alpha}{sb+t \beta}	\\
	&=& \frac{a+ \alpha }{b+ \beta}\\
	=\frac{a}{ b}	\odot \frac{ \alpha}{ \beta},
\end{align*}	
for all $s,t \in \R$, $st \neq 0$,
hence
\begin{align*}
(sa + t \alpha)(b+\beta) = (a + \alpha)(sb + t\beta),
\end{align*}	
and thus \eqref{eq:wd_dp}.
If $s=t$, the condition trivially holds true.
Let us assume $s \neq t$. Then $a \beta - \alpha b=0$,
which means that both fractions coincide.
In this case $x \circ x =x$, which was already stated as the idempotency property of $\circ$
 ("All fractions coincide with their dual square.")
\end{proof}

\end{document}